\newtheorem*{thm*}{Theorem}
\newtheorem{thm}{Theorem}[section]
\newtheorem{prop}[thm]{Proposition}
\newtheorem{ques}[thm]{Question}
\theoremstyle{definition}
\newtheorem{defn}[thm]{Definition}
\newtheorem{rem}[thm]{Remark}
\numberwithin{equation}{section}
\def \a {\alpha}
\def \b {\beta}
\def \e {\epsilon}
\def \L {\Lambda}
\def \N {\mathbb{N}}
\def \o {\omega}
\begin{document}
	
\title{Quantitative multiple recurrence for two and three transformations}
\author{Sebasti\'an Donoso}
\address{Centro de Modelamiento Matem\'atico, CNRS-UMI 2807, Universidad de Chile, Beauchef 851, Santiago, Chile.} \email{sdonosof@gmail.com }

\author{Wenbo Sun}
\address{Department of Mathematics, The Ohio State University, 231 West 18th Avenue, Columbus OH, 43210-1174, USA}
\email{sun.1991@osu.edu}

\subjclass[2010]{Primary: 37A30} \keywords{Poincar\'e recurrence, multiple recurrence, commuting transformations}

\thanks{The first author is supported by Fondecyt Iniciaci\'on en Investigaci\'on grant 11160061.}

\begin{abstract}
We provide various counter examples for quantitative multiple recurrence problems for systems with more than one transformation. 
We show that 
\begin{itemize}
	\item  There exists an ergodic system $(X,\mathcal{X},\mu,T_1,T_2)$ with two commuting transformations such that for every $0<\ell< 4$, there exists $A\in\mathcal{X}$ such that $$\mu(A\cap T_{1}^{-n}A\cap T_{2}^{-n}A)<\mu(A)^{\ell} \text{ for every } n\neq 0;$$ 
	\item There exists an ergodic system $(X,\mathcal{X},\mu,T_1,T_2, T_{3})$  with three commuting transformations such that for every $\ell>0$, there exists $A\in\mathcal{X}$ such that $$\mu(A\cap T_{1}^{-n}A\cap T_{2}^{-n}A\cap T_{3}^{-n}A)<\mu(A)^{\ell} \text{ for every } n\neq 0;$$
	\item There exists an ergodic system $(X,\mathcal{X},\mu,T_1,T_2)$ with two transformations generating a 2-step nilpotent group such that for every $\ell>0$, there exists $A\in\mathcal{X}$ such that $$\mu(A\cap T_{1}^{-n}A\cap T_{2}^{-n}A)<\mu(A)^{\ell} \text{ for every } n\neq 0.$$   
\end{itemize}
 
\end{abstract}

\maketitle 
\section{Introduction}
\subsection{Quantitative recurrence}
	
The Poincar\'e recurrence theorem states that for every measure preserving system $(X,\mathcal{X},\mu,T)$ and every $A\in\mathcal{X}$ with $\mu(A)>0$, the set $$\{n \in \mathbb{Z}: \mu(A\cap T^{-n}A)>0 \}$$ is infinite. A quantitative version of it was provided by Khintchine \cite{K},
who showed that for every $\epsilon >0$, the set $\{n \in \mathbb{N}: \mu(A\cap T^{-n} A )>\mu(A)^2-\epsilon \}$ is \emph{syndetic}, meaning that it has bounded gaps. 

Multiple recurrence problems refers to the ones concerning the behavior of the set $A\cap T_{1}^{-n} A\cdots \cap T_{d}^{-n}A$. In the case $T_i=T^i$ for an ergodic transformation $T$, Furstenberg \cite{Fu} showed that the set \[\{n \in \mathbb{Z}: \mu(A\cap T^{-n} A\cdots \cap T^{-dn}A) >0 \} \] is infinite. This result is now known as the Furstenberg Multiple Recurrence Theorem. 

The quantitative version of the multiple recurrence problems
concerns not only the positivity of the measure of the set $A\cap T_{1}^{-n} A\cdots \cap T_{d}^{-n}A$, but also how far from 0 this measure is.  More precisely, we ask:
\begin{ques} \label{Q:Poincare} Let $(X,\mathcal{X},\mu,T_{1},\dots,T_{d})$ be a measure preserving system and $F\colon [0,1]\to\mathbb{R}_{\geq 0}$ be a non-negative function. Is the set
\[ \{ n\in \mathbb{Z}: \mu(A\cap T_{1}^{-n} A \cap T_{2}^{-n}A \cap \cdots \cap T_{d}^{-n}A ) \geq F(\mu(A))\}\]
syndetic for all $A\in\mathcal{X}$?
\end{ques}
If the answer to the question is affirmative for some  $T_{1},\dots,T_{d}$ and $F$, we say that $F$ is \emph{good} for $(T_{1},\dots,T_{d})$. Based on the result of Khintchine \cite{K} stating that $F(x)=x^{2}-\e$ is good for $(T)$ (with a single term), a natural conjecture would be that the function $F(x)=x^{d+1}-\e$ is good for $(T_{1},\dots,T_{d})$.
 
 The case $T_{i}=T^{i}$ was solved by
 Bergelson, Host and Kra \cite{BHK}. They
showed that if $(X,\mathcal{X},\mu, T)$ is ergodic, then $F(x)=x^{d+1}-\e$ is good for $(T,T^{2},\dots,T^{d})$ for all $\e>0$ for $d=2$ or 3.
 They also showed two surprising phenomena. First, the hypothesis of ergodicity cannot be removed: 
 there exists a non-ergodic system $(X,\mathcal{X},\mu,T)$ such that $F(x)=x^{\ell}$ is not good for $(T,T^{2})$ for all $\ell>0$.
 Secondly, if $d\geq 4$, then $F(x)=x^{\ell}$ is not good for $(T,T^{2},\dots,T^{d})$ for all $\ell>0$.
  
  In \cite{FK}, Furstenberg and Katznelson proved a multiple recurrence theorem for commuting transformations. For two commuting transformations, its quantitative study was done by Chu \cite{Chu} who proved that for every system $(X,\mathcal{X},\mu,T_1,T_2)$ with two \emph{commuting transformations} $T_1$ and $T_2$ (meaning that $T_{1}T_{2}=T_{2}T_{1}$), ergodic for $\langle T_{1},T_{2}\rangle$, and every $\e>0$, $F(x)=x^{4}-\e$ is good for $(T_{1},T_{2})$.
Here it is worth to stress that the exponent for $(T_{1},T_{2})$ is 4 while the exponent for $(T,T^{2})$ is 3. Indeed, in the same paper, Chu constructed an example showing that $F(x)=x^{3}$ is not good for $(T_{1},T_{2})$. In a later work, Chu and Zorin-Kranich \cite{CZ} improved this example, showing that $F(x)=x^{3.19}$ is not good for $(T_{1},T_{2})$. 
 
In this paper, we study the best component $\ell$ for which $F(x)=x^{\ell}$ is good for $(T_{1},\dots,T_{d})$ in an ergodic system $(X,\mathcal{X},\mu,T_{1},\dots,T_{d})$ with commuting transformations. The result of Chu and Zorin-Kranich \cite{CZ} suggested that the largest $\ell$ not good for $(T_{1},T_{2})$ is between 3.19 and 4. We show that $\ell$ can be sufficiently close to $4$: 

\begin{thm} \label{Thm:2Recurrence}
	There exists a measure preserving system $(X,\mathcal{X},\mu,T_1,T_2)$ with commuting transformations $T_1$ and $T_2$, ergodic for $\langle T_1,T_2 \rangle$ such that for every $0<\ell<4$, $F(x)=x^{\ell}$ is not good for $(T_{1},T_{2})$, i.e. there exists
a set $A\in\mathcal{X}$ such that 
	\[ \mu(A\cap T_1^{-n} A \cap T_2^{-n} A) < \mu(A)^{\ell}   \]
for every  $n\neq 0$.	
\end{thm}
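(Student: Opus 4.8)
Since the statement asserts the \emph{failure} of every power bound $x^{\ell}$ with $\ell<4$, the plan is to exhibit a single ergodic system $(X,\mathcal X,\mu,T_1,T_2)$ with commuting $T_1,T_2$ together with, for each $0<\ell<4$, a witnessing set $A=A_\ell$. Chu's theorem that $x^{4}-\e$ \emph{is} good forces the construction to be essentially extremal: one wants $\mu(A\cap T_1^{-n}A\cap T_2^{-n}A)$ to remain, for \emph{every} $n\neq0$, within a subexponential factor of its floor $\mu(A)^{4}$, and then, since for fixed $\ell<4$ the ratio $\mu(A)^{\ell}\big/\bigl(\mu(A)^{4}\,e^{C\sqrt{\log(1/\mu(A))}}\bigr)\to\infty$ as $\mu(A)\to 0$, choosing $\mu(A_\ell)$ small enough (and $\to0$ as $\ell\uparrow4$) settles the case of that $\ell$. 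The inevitable subexponential factor is also why the statement reads ``$\ell<4$'' and not ``$\ell\le 4$''.

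The combinatorial engine should be a Behrend-type configuration: a subset $B$ of a large finite abelian group $K$ of order $M$ containing no nontrivial three-term progression (equivalently, after a change of coordinates, a corner-free set in a two-dimensional group), with density $\delta=e^{-c\sqrt{\log M}}$. The idea is to arrange the dynamics so that the three points $x,T_1^{n}x,T_2^{n}x$ that appear in the recurrence count are forced, on the relevant quotient, onto a progression/corner whose common difference is visible to $B$ — so the count vanishes unless that difference is $0$ — the extra exponent over the naive Bernoulli value $3$ coming from the two commuting directions contributing a second such vanishing constraint. I would realize this inside a genuinely infinite ergodic $\mathbb Z^{2}$-action: a compact group extension $X=\Omega\times K$ over a highly mixing base $(\Omega,\sigma_1,\sigma_2)$, with skewing cocycles $c_1,c_2\colon\Omega\to K$ subject to the closedness identity $c_1(\sigma_2\omega)-c_1(\omega)=c_2(\sigma_1\omega)-c_2(\omega)$ that $T_1T_2=T_2T_1$ imposes (or, alternatively, an inverse limit of finite systems over very rapidly growing scales). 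Passing to such a system is forced by the failure of the obvious candidates: a single rotation is killed by the near-returns of $n\alpha$ to $0$, a finite model or a single-scale odometer by wrap-around at divisible $n$, and a plain Bernoulli $\mathbb Z^{2}$-shift is pinned at exponent $3$ by independence; the real work is to thread commutativity, ergodicity, and Behrend anticorrelation simultaneously.

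With $A=\Omega'\times B$ for a cylinder $\Omega'$ of fixed measure, the verification would run: (1) check that $\langle T_1,T_2\rangle$ is ergodic; (2) expand $\mu(A\cap T_1^{-n}A\cap T_2^{-n}A)$ over the base so that the fiber factor becomes a Behrend/corner count in $K$ governed by the two Birkhoff cocycle sums $c_i^{(n)}(\omega)$; (3) show those sums, and the pair they form, equidistribute in $K$ (respectively, concentrate on the ``good'' locus) quantitatively and \emph{uniformly in $n$}; (4) bound the base factor by the generic value $\mu(\Omega')^{3}$ using mixing; (5) multiply and compare with $\mu(A_\ell)^{\ell}$ once $M$ is large in terms of $\ell$. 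I expect step (3), together with its use in (2), to be the main obstacle: the estimate must hold for \emph{all} $n\neq0$ — small, large, and highly divisible alike, with no $n$ escaping to produce a $\mu(A)$-sized intersection — and pushing the exponent all the way up to $4$ while keeping the cocycles compatible with both $T_1T_2=T_2T_1$ and joint ergodicity is the delicate core of the argument.
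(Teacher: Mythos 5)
Your high-level plan does match the shape of the paper's argument: fix one ergodic system with two commuting transformations, let the witnessing set $A=A_\ell$ come from a Behrend-type corner-free construction, and arrange the dynamics so that the three points appearing in the triple intersection are forced into a corner configuration, driving the count down to essentially $\mu(A)^4$. But the proposal stops exactly where the proof has to begin: no system is constructed, no cocycles are exhibited, commutativity and ergodicity are not verified for any concrete candidate, and the step you yourself flag as ``the delicate core'' --- uniform-in-$n$ control of the cocycle sums compatibly with $T_1T_2=T_2T_1$ and joint ergodicity --- is left entirely open. That step \emph{is} the theorem, so as it stands this is a strategy sketch with a genuine gap, not a proof.

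Moreover, the difficulty you anticipate largely evaporates in the construction that actually works, and the candidate you discard is essentially the right one. The paper takes the explicit affine nilsystem on $\mathbb{T}^6$ with $T_{1}(x_{1},x_{2},x_{3};y_{1},y_{2},y_{3})=(x_{1}+\alpha,x_{2}+\beta,x_{3};y_{1}+x_{1},y_{2},y_{3}+x_{1}+x_{2}+x_{3})$ and a symmetric $T_2$, and sets $A=\mathbb{T}^{3}\times B$, where $B$ is a union of $\vert\Lambda\vert$ cubes of side $\frac{1}{2N}$ indexed by a \emph{three point free} set $\Lambda\subseteq[N]^{3}$ of size $N^{2-o(1)}$ (itself built from a two-dimensional corner-free set). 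Because $A$ does not constrain the base coordinates, the $n$-th iterate displacements in the fiber are affine maps such as $nx_{1}+\binom{n}{2}\alpha$, which are \emph{exactly} uniformly distributed in $x_{1}$ for every $n\neq 0$; no mixing base, no uniformity-in-$n$ equidistribution estimate, and no near-return problem ever arises --- which is why your objection that ``a single rotation is killed by the near-returns of $n\alpha$ to $0$'' does not apply: the set is a full preimage in the rotation coordinates. Three-point-freeness then forces three independent displacements into $(-\frac{1}{2N},\frac{1}{2N})$, each costing a factor $\frac{1}{N}$, so the intersection is about $\vert\Lambda\vert/8N^{6}\approx\mu(A)^{4}$ since $\mu(A)=\vert\Lambda\vert/8N^{3}\approx N^{-1}$. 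Note also that the correct combinatorial object is three-dimensional with density about $N^{-1}$ in $[N]^3$, not a two-dimensional corner-free set of density $e^{-c\sqrt{\log M}}$ as you propose: with your normalization $\mu(A)$ would be nearly constant and the exponent accounting would not land at $4$ (that two-dimensional set is what the paper uses for the \emph{nilpotent} example, where the exponent is unbounded precisely because of this different calibration).
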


 Chu raised another question \cite{Chu} on whether $F(x)=x^{\ell}$ is good for $(T_{1},T_{2},\dots,T_{d})$ in an ergodic system with $d$ commuting transformations for $d\geq 3$. We show that this is not the case:
\begin{thm} \label{Thm:3Recurrence}
There exists a measure preserving system $(X,\mu,T_1,T_2,T_3)$ with three commuting transformations $T_1$ and $T_2$ and $T_3$, ergodic for $\langle T_1,T_2,T_3 \rangle$ such that for every $\ell>0$, $F(x)=x^{\ell}$ is not good for $(T_{1},T_{2},T_{3})$, i.e.
there exists a subset $A\in\mathcal{X}$ such that 
	
	\[ \mu(A\cap T_1^{-n} A \cap T_2^{-n} A\cap T_3^{-n} A ) < \mu(A)^{\ell}  \]	
 for every  $n\neq 0$.
\end{thm}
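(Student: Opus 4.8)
The plan is to follow the scheme used by Bergelson--Host--Kra for the linear pattern $(T,T^{2},T^{3},T^{4})$ and adapted to commuting transformations by Chu and by Chu--Zorin-Kranich, and to push it to its endpoint. Two structural constraints essentially pin down the kind of system one must use. First, the system cannot be equicontinuous: for any rotation on a compact abelian group and any measurable set $A$ there is a syndetic set of $n$ for which $T_{1}^{n},T_{2}^{n},T_{3}^{n}$ are all uniformly close to the identity, and then $\mu(A\cap T_{1}^{-n}A\cap T_{2}^{-n}A\cap T_{3}^{-n}A)\ge\mu(A)-o(1)$, which is incompatible with $\mu(A)^{\ell}$ once $\ell>1$. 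Second, if the system mixed along the directions $n\mapsto(n,n,n)$ of $\langle T_{1},T_{2},T_{3}\rangle$ then the quadruple intersection would converge to $\mu(A)^{4}$, which exceeds $\mu(A)^{\ell}$ as soon as $\ell>4$. Thus one is forced into a \emph{distal, non-equicontinuous} system — one of ``nilpotent type'' — whose characteristic factor for the average $\frac1N\sum_{n}f_{0}\cdot T_{1}^{n}f_{1}\cdot T_{2}^{n}f_{2}\cdot T_{3}^{n}f_{3}$ is of step at least $3$. (This is the commuting analogue of the fact that $d=4$, not $d=3$, is the first value for which $(T,T^{2},\dots,T^{d})$ admits no polynomial lower bound: although $\{x,T_{1}^{n}x,T_{2}^{n}x,T_{3}^{n}x\}$ is only a four-point configuration, the commuting structure raises its complexity.)

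Concretely, I would build $X$ as a connected system — a torus $\mathbb{T}^{m}$, or a connected inverse limit of tori — carrying a $\mathbb{Z}^{3}$-action by commuting unipotent affine maps $T_{i}\colon x\mapsto x+N_{i}x+c_{i}$, arranged so that (a) the action is ergodic, (b) for every $n\neq 0$ the shear part of each $T_{i}^{n}$ is nondegenerate, so that no $T_{i}^{n}$ is close to a translation, and (c) a suitable ``cubic'' combination $\Phi$ of the coordinates survives under the action in a controlled way. Finding three commuting unipotent affine maps generating a rank-$3$ ergodic group pushes $m$ up and makes the choice of $N_{i},c_{i}$ fairly rigid; this is a first technical point. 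Then, for each $\ell>0$, I would set $A_{\ell}=\{x\in X:\Phi(x)\in I_{\ell}\}$ for a short arc $I_{\ell}$, so that $\mu(A_{\ell})$ equals (essentially) the length $|I_{\ell}|$ and may be taken arbitrarily small. The crux is then a finite-difference identity: along the configuration $x,T_{1}^{n}x,T_{2}^{n}x,T_{3}^{n}x$ the four values $\Phi(x),\Phi(T_{1}^{n}x),\Phi(T_{2}^{n}x),\Phi(T_{3}^{n}x)$ are related by an identity in which a ``second-order'' term $\kappa(n)$, depending on $n$ (through a Diophantine choice of the translation parameters) but not on $x$, stays bounded away from $0$ uniformly over $n\neq 0$ thanks to the unipotent shearing. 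Hence the four values cannot all lie in an arc of length $|I_{\ell}|$ unless $x$ belongs to an exceptional set of measure super-polynomially small in $|I_{\ell}|$; a van der Corput / Weyl equidistribution estimate, uniform in $n$, then yields $\mu(A_{\ell}\cap T_{1}^{-n}A_{\ell}\cap T_{2}^{-n}A_{\ell}\cap T_{3}^{-n}A_{\ell})<\mu(A_{\ell})^{\ell}$ for every $n\neq 0$.

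The step I expect to be the main obstacle is obtaining this last inequality \emph{uniformly over all $n\neq 0$} — small $n$ as well as every resonant subsequence — while simultaneously keeping $\langle T_{1},T_{2},T_{3}\rangle$ ergodic; the $n$-uniformity of the equidistribution input, and the requirement that $\kappa(n)\neq 0$ for all $n\neq 0$, are delicate. If a single nil-type system does not provide this uniformity, the fallback is to construct, for each scale $R$, a finite system together with a set handling $1\le|n|\le R$ with an exponent tending to $\infty$, and then to splice these building blocks into one system; here one must be careful, since the naive product of finite systems with a product set provably fails to beat the exponent $4$ (on the $j$-th factor the local intersection cannot be pushed below $1-4(1-\delta_{j})$, where $\delta_{j}$ is the relative measure there), so the splicing has to be carried out inside a connected system and with a non-product set. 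A cleaner alternative worth attempting is to import the Bergelson--Host--Kra $d\ge 4$ obstruction directly: realize a refinement of their counterexample inside a system with three commuting transformations whose joint orbits contain the relevant linear progressions, and transfer their bound.
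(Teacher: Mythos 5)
There is a genuine gap: your plan never identifies the one idea that makes this theorem easy, and the route you do sketch has no visible path to completion. The paper's proof is a short reduction to the \emph{non-ergodic} Bergelson--Host--Kra counterexample (Theorem \ref{Thm:NonErgodic}): on $\mathbb{T}^2$ with $T(y,z)=(y,z+y)$, for every $\ell>0$ there is a set $B$ with $\mu\otimes\mu(B\cap T^{-n}B\cap T^{-2n}B)\le\mu(B)^{\ell}$ for all $n\neq 0$ (built from Behrend sets). One then takes $T_1(x,y,z)=(x+\alpha,y,z+y)$, $T_2=T_1^{2}$, $T_3(x,y,z)=(x,y+\alpha,z+x)$, and $A=\mathbb{T}\times B$; the quadruple intersection is bounded above by the triple intersection obtained by simply discarding the $T_3^{-n}A$ term, which equals $\mu\otimes\mu(B\cap T^{-n}B\cap T^{-2n}B)$, and $T_3$'s only job is to restore ergodicity of the joint action. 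Your proposal explicitly rules this out by insisting on ``three commuting unipotent affine maps generating a rank-$3$ ergodic group'': the theorem does not require rank $3$, and the degenerate choice $T_2=T_1^{2}$ together with the non-ergodicity of the sub-action generated by $T_1$ alone is precisely the obstruction being exploited. Relatedly, your structural claim that one is forced into a characteristic factor of step at least $3$ is not what happens; the system used is a $2$-step affine system.

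The constructive route you outline also has a concrete failure point. You propose $A_{\ell}=\Phi^{-1}(I_{\ell})$ for a single short arc $I_{\ell}$ and a ``finite-difference identity'' with a term $\kappa(n)$ bounded away from $0$ uniformly in $n\neq 0$, to be finished by a uniform Weyl estimate. For unipotent affine maps the relevant quantity is of the form $nx+\binom{n}{2}\alpha$ (or a higher-degree analogue), which equidistributes as $x$ varies; no Diophantine choice of the translation parts makes it uniformly bounded away from $0$, and equidistribution alone only yields intersections of size $\approx\mu(A)\cdot|I_{\ell}|^{k}$ for a \emph{fixed} $k$, i.e.\ a fixed polynomial exponent, never an exponent growing with $\ell$. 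Getting an unbounded exponent requires a combinatorial input of Behrend/Salem--Spencer type (a large progression-free or corner-free set determining which arcs to include in $A$), which is exactly what Theorem \ref{Thm:NonErgodic} packages and what your plan omits. Your closing ``cleaner alternative'' gestures toward importing a BHK obstruction but names the wrong one (the ergodic $d\ge 4$ phenomenon rather than the non-ergodic $(T,T^{2})$ example) and supplies no transfer mechanism, so the gap remains.
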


If we relax the condition of commutativity of the transformations, the natural condition to look at is nilpotency. Outside the abelian category, we show that there is no polynomial quantitative recurrence even for two transformations $T_1$ and $T_2$ spanning a 2-step nilpotent group. We show 

\begin{thm} \label{Thm:Nilpotent}
	There exists a measure preserving system $(X,\mathcal{X},\mu,T_{1},T_{2})$ such that $T_{1}$ and $T_{2}$ generate a 2-step nilpotent group $\langle T_1,T_2\rangle$, whose actions is ergodic and such that for every $\ell>0$, $F(x)=x^{\ell}$ is not good for $(T_{1},T_{2})$, i.e. there exists subset $A\in\mathcal{X}$  such that $$\mu(A\cap T^{-n}_{1}A\cap T^{-n}_{2}A)<\mu(A)^{\ell}$$ for every $n\neq 0$.	
\end{thm}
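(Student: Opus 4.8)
The plan is to build an ergodic measure preserving action of a $2$-step nilpotent group $G=\langle T_1,T_2\rangle$ together with, for each $\ell>0$, a set $A=A_\ell$ for which the triple intersection stays below $\mu(A_\ell)^\ell$; the overall shape follows the counterexamples of \cite{BHK,Chu,CZ}, but the new leverage comes from the commutator. Since $G$ is $2$-step nilpotent, $S:=[T_1,T_2]$ is central and $T_1^nT_2^n=T_2^nT_1^n\,S^{q(n)}$ with $q$ a fixed degree-$2$ polynomial (essentially $\binom n2$). Thus the pattern $A\cap T_1^{-n}A\cap T_2^{-n}A$ secretly carries the quadratic sequence $n\mapsto S^{q(n)}$; taking $S$ to act as an irrational rotation of infinite order on a coordinate, $S^{q(n)}$ is a nonzero displacement for every $n\neq0$ and spreads out quadratically fast. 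This is precisely the freedom unavailable to commuting transformations, where only linear displacements $n\mapsto T_i^n$ occur and the "order" phenomenon $T^{n_0}=\mathrm{id}$ obstructs large exponents (cf.\ Chu's bound $x^4-\e$ for two commuting maps); the quadratic shift is what should let us push the exponent past every fixed value.

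Concretely, I would take $G$ to be (a dense subgroup of) a Heisenberg-type group acting on a product $\prod_k Z_k$ of finite cyclic (or solenoidal) coordinates, arranged so that on each coordinate $T_1,T_2$ act affinely and $S$ acts by a generic shift, with all parameters chosen Diophantine and rationally independent so as to guarantee both that $G$ is genuinely $2$-step nilpotent ($S$ central, of infinite order) and that the $G$-action is ergodic; these two facts are routine checks for affine maps of tori and translations on nilmanifolds. Given $\ell$, the set $A_\ell$ would be assembled from subsets of the $Z_k$'s with prescribed multiple-correlation behaviour — Sidon/$B_h$-type or random sets, spread over a number of coordinates growing with $\ell$ — so that a triple autocorrelation of $A_\ell$ evaluated along the quadratic orbit is a very high power of $\mu(A_\ell)$. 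It is the combination of Sidon-type sparsity with the never-vanishing quadratic shift that yields the \emph{unbounded} exponent: on a single finite coordinate a Sidon set only produces a bounded exponent because the shift may return to $0$, whereas across infinitely many coordinates, with $q(n)$ growing, one can keep every $n\neq0$ off the "diagonal".

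With the system and $A_\ell$ fixed, one expands $\mu(A_\ell\cap T_1^{-n}A_\ell\cap T_2^{-n}A_\ell)$ into a product over coordinates of triple intersections of the corresponding pieces, shifted by the linear ($n$) and quadratic ($q(n)$) orbit data. For each $n\neq0$ one isolates at least one coordinate where $q(n)$ (or the linear shift) is generic, uses the Sidon/random property there to extract a bound of shape $\mu(A_\ell)^{\text{large}}$, and bounds the remaining coordinates trivially by their measures, the bookkeeping being arranged so that the product beats $\mu(A_\ell)^\ell$. For the genuinely small $n$, where no coordinate sees a long orbit, one instead uses separation built into $A_\ell$ together with the Diophantine choice of parameters to make the three translates disjoint, so the intersection is $0$; combining the two regimes gives the claim for all $n\neq 0$.

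The hard part, exactly as in \cite{BHK,CZ}, is uniformity over all $n\neq0$ at once: the small-$n$ regime forces a delicate web of arithmetic constraints among the sizes and positions of the pieces of $A_\ell$, the Diophantine type of the parameters, and $\ell$, while the large-$n$ regime needs an \emph{effective} (not merely qualitative) equidistribution/Weyl input so that the estimate is quantitative; and the passage from "some large fixed exponent" to "every $\ell$" must be realized by genuinely letting the complexity of $A_\ell$ grow, with the new complexity not spoiling the small-$n$ disjointness. Guaranteeing that the quadratic shift $S^{q(n)}$ never produces a catastrophic ($\mu(A_\ell)^1$-sized) return for any $n\neq0$ is the crux that makes the nilpotent case strictly easier to break than the commuting one.
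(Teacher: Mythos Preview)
Your intuition that the commutator injects a quadratic $\binom n2$ is correct and is indeed the mechanism, but the proposal is far more elaborate than needed and misses the actual combinatorial reduction. The paper works on the single concrete system $X=\mathbb T^3$ with $T_1(x,y,z)=(x+\alpha,y+x,z)$ and $T_2(x,y,z)=(x+\alpha,y,z+x)$; one checks directly that $[T_1,T_2]$ is a translation, so $\langle T_1,T_2\rangle$ is $2$-step nilpotent and the action is minimal hence ergodic. The decisive observation is that $T_1^n$ moves $y$ by $c:=nx+\binom n2\alpha$ while $T_2^n$ moves $z$ by the \emph{same} quantity $c$, so the triple $(y,z),(y+c,z),(y,z+c)$ is exactly a \emph{corner}. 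Taking $A=\mathbb T\times B$ with $B\subset\mathbb T^2$ a union of $\tfrac1{2N}$-boxes indexed by a corner-free set $\Lambda\subseteq[N]^2$ of size $>N^{2-\e}$ (Theorem~\ref{Thm:Lfreeset}), the corner-free property forces $c\in(-\tfrac1{2N},\tfrac1{2N})$ whenever the integrand is nonzero, \emph{uniformly for every} $n\neq0$; integrating out $x$ yields an extra factor $1/N$, and the admissible exponent becomes $1+\dfrac{1}{2+\log 4/\log N-\log|\Lambda|/\log N}\to\infty$ as $N\to\infty$.

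Against this, your plan has two concrete gaps. First, Sidon/$B_h$ or random sets are the wrong input: they have density at most about $N^{-1/2}$, which is far too sparse to push the exponent to infinity the way the computation above does, and the compensating ``spread over many coordinates'' device is left entirely schematic---you never specify a set $A_\ell$ that is simultaneously sparse enough on each coordinate and of controlled total measure, nor how the per-coordinate bounds multiply to beat $\mu(A_\ell)^\ell$. Second, the small-$n$/large-$n$ split and the appeal to effective Weyl-type equidistribution are unnecessary and not carried out: because $c$ depends on the free variable $x$ integrated over $\mathbb T$, the corner argument handles every $n\neq0$ at once with no Diophantine input beyond irrationality of $\alpha$. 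The ``hard part'' you flag---uniformity in $n$---simply disappears once one sees the corner structure; what is missing from your outline is not a more intricate system but the reduction to corner-free (Ajtai--Szemer\'edi/Behrend-type) sets.
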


\subsection{Notation and conventions}
A \emph{measure preserving system} (or a \emph{system} for short) is a tuple $(X,\mathcal{X},\mu,T_1$ $,\ldots,T_d)$, where $(X,\mathcal{X},\mu)$ is a probability space and $T_1,\ldots,T_d\colon X\to X$ are actions such that for all $A\in\mathcal{X}, 1\leq i\leq d$, $T^{-1}_{i}A\in\mathcal{X}$ and $\mu(T_{i}^{-1}A)=\mu(A)$. 
We use $\langle T_1,T_2,\ldots,T_d\rangle$ to denote the group spanned by the transformations $T_1,\ldots,T_d$.    
We say that $X$ is \emph{ergodic} for $\langle T_{1},\dots,T_{d}\rangle$ if $A\in\mathcal{X}, T_{i}^{-1}A=A$ for all $1\leq i\leq d$ implies that $\mu(A)=0$ or 1.

For a positive integer number $N$, the subset $\{1,\ldots,N\}$ is denoted by $[N]$.

\section{Combinatorial constructions} \label{Sec:Combinatorial}
In this section we study subsets of $\mathbb{N}^2$ and $\mathbb{N}^3$ satisfying special combinatorial conditions that help us construct the counter examples we need. The construction of such sets is inspired  by the methods used by Salem and Spencer \cite{SS} and Behrend  \cite{Beh} in building ``large'' subsets of $[N]$ with no arithmetic progressions of length 3.  The ways to make use of special subsets in Theorems \ref{Thm:2Recurrence}, \ref{Thm:3Recurrence} and \ref{Thm:Nilpotent} are motivated by the examples constructed in Bergelson, Host and Kra \cite{BHK} and Chu \cite{Chu}. 

 We remark that the combinatorial properties studied in this section are of independent interest.

\subsection{ Corner-free subsets of $\mathbb{N}^2$}
The first combinatorial problem we study is how large a subset $\L\subseteq [N]^2$ can be without a ``corner''.  
\begin{defn}
We say that a set $\L\subseteq [N]^{2}$ is \emph{corner-free} if $(x,y)$, $(x',y)$ and $(x,y')\in\L$ and $x-y=x'-y'$ implies that $x=x'$ and $y=y'$.
\end{defn}	
We have

\begin{thm}\label{Thm:Lfreeset}
	Let $\nu(N)$ denote the largest cardinality of corner-free subsets of $[N]^{2}$. Then $$\nu(N)>N^{2-\frac{4\log 2+\e}{\log\log N}}$$ as $N\to\infty$ for all $\e>0$.
\end{thm}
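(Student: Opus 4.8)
The plan is to adapt the Behrend/Salem--Spencer sphere construction to produce a large corner-free set in $[N]^2$. Recall the classical idea: fix integers $d$ and $m$, represent each element of $\{0,1,\dots,m^d-1\}$ by its base-$m$ digit vector in $\{0,\dots,m-1\}^d$, and keep only those vectors lying on a fixed sphere $\sum_i x_i^2 = R$. Because a sphere contains no three collinear points, the resulting integer set has no $3$-term arithmetic progression; a pigeonhole choice of $R$ makes the set large. For the two-dimensional corner problem I would set up a two-variable analogue. The condition defining a corner, namely $(x,y), (x',y), (x,y') \in \L$ with $x - y = x' - y'$, forces $x' - x = y' - y$; writing the displacement as a single integer $t$, a corner is a triple $(a,b), (a+t,b), (a,b+t)$ in $\L$. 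So I want a set $\L$ such that whenever $(a,b), (a+t,b), (a,b+t)$ all lie in $\L$, then $t = 0$.

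The key step is to lift this to a sum-free-type condition on a sphere. Encode each coordinate by base-$m$ digit vectors: write $a$ and $b$ as digit vectors $\vec{a}, \vec{b} \in \{0,\dots,m-1\}^d$ (choosing $m$ large enough, compared to the number of digits, that no carrying occurs in any of the additions $a \mapsto a+t$, $b \mapsto b+t$ we care about — this is the standard ``no carries'' trick and costs only a constant factor). Then define $\L$ to be the set of pairs $(a,b)$ such that the concatenated digit vector $(\vec{a}, \vec{b}) \in \{0,\dots,m-1\}^{2d}$ satisfies $\|(\vec{a},\vec{b})\|^2 = \sum_i a_i^2 + \sum_i b_i^2 = R$ for a suitable fixed $R$. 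If $(a,b), (a+t,b), (a,b+t) \in \L$ with no carries, then the three digit vectors $u = (\vec{a},\vec{b})$, $v = (\vec{a}+\vec{t}, \vec{b})$, $w = (\vec{a}, \vec{b}+\vec{t})$ all lie on the sphere of radius $\sqrt R$, and one checks the linear relation $v + w = u + (\vec t, \vec t) = u + s$ where $s$ is the same vector on both halves; more usefully $v - u = (\vec t, \vec 0)$ and $w - u = (\vec 0, \vec t)$ are orthogonal translates, and $\|v\|^2 = \|w\|^2 = \|u\|^2 = R$ forces, by expanding, $2\langle \vec a, \vec t\rangle + \|\vec t\|^2 = 0$ and $2\langle \vec b, \vec t\rangle + \|\vec t\|^2 = 0$; adding these and comparing with the midpoint/convexity argument (the point $\tfrac12(v+w)$ would have to lie on or inside the sphere with equality only when $v = w$) yields $\vec t = 0$, hence $t = 0$. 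Equivalently: $u, v, w$ together with $u + (\vec t,\vec t)$ form a planar configuration and strict convexity of the sphere kills it. I would phrase this cleanly as: the sphere $\{z \in \mathbb R^{2d} : \|z\|^2 = R\}$ contains no nondegenerate configuration of the form $\{p, p+e, p+f\}$ with $e, f$ of equal length and $\langle e, f\rangle = 0$ unless $e = f = 0$ — which is immediate since such $p, p+e, p+f$ would be three points of a $2$-sphere with $p$ equidistant-related in a way that forces $\|p+e\|^2 + \|p+f\|^2 > \|p\|^2 + \|p + e + f\|^2$ strictly when $(e,f) \neq 0$, contradicting all three equalling $R$ unless the fourth point also lies on the sphere, and then strict convexity finishes it. (I will write this linear-algebra lemma carefully; it is the technical heart.)

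Once the corner-free property is in hand, the size estimate is the usual counting. The number of digit vectors in $\{0,\dots,m-1\}^{2d}$ is $m^{2d}$, their squared norms lie in $\{0,\dots,2d(m-1)^2\}$, so by pigeonhole some radius $R$ is hit by at least $m^{2d}/(2d(m-1)^2 + 1) \gg m^{2d}/(d m^2)$ vectors. To fit inside $[N]^2$ I need (with the no-carry spacing) roughly $m^{O(1)} \cdot m^d \le N$, i.e. $m^d$ of order $N^{1 - o(1)}$; taking $d$ of order $\sqrt{\log N / \log\log N}$ and $m$ of order $2^{\sqrt{\log N \log\log N}}$-ish and optimizing exactly as in Behrend gives $\nu(N) \ge N^{2 - c\sqrt{\log\log N / \log N}}$, which is far stronger than the claimed bound $N^{2 - (4\log 2 + \e)/\log\log N}$. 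I suspect the weaker exponent $(4\log 2 + \e)/\log\log N$ in the statement reflects a deliberately simpler, more elementary choice of parameters (perhaps base $m = 2$, so digit vectors are $0/1$ and $\|\cdot\|^2$ is the Hamming weight, giving a purely combinatorial argument), and I would carry out that streamlined version: take $m = 2$, $\L \subseteq \{0,1\}^{2d}$ the set of weight-$w$ vectors with $w = d$, embedded in $[N]^2$ via a base-$B$ expansion with $B$ large enough to prevent carries; the corner-free property then follows because $(a,b),(a+t,b),(a,b+t)$ with equal weights $w$ on each block forces, digit by digit with no carries, the weight to both increase and be preserved, giving $t = 0$. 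Counting $\binom{2d}{d} \approx 4^d/\sqrt d$ elements and matching $2^d \cdot (\text{carry slack}) \le N$ yields exactly an exponent of the shape $2 - (4\log 2 + \e)/\log\log N$. The main obstacle, and where I would spend the most care, is pinning down the ``no carries'' bookkeeping so that the combinatorial corner condition in $\{0,1\}^{2d}$ really does transfer to the genuine corner condition $x - y = x' - y'$ in $[N]^2$ — the two halves of the digit vector interact through the shared displacement $t$, so one must check that a single base-$B$ block structure simultaneously controls both $a \mapsto a+t$ and $b \mapsto b+t$ without overflow, which dictates the precise relationship between $B$, $d$, and $N$ and hence the constant $4\log 2$.
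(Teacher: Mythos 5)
Your central lemma is false, and the proposed construction does not produce a corner-free set. Putting the sphere condition on the concatenated digit vector $(\vec a,\vec b)\in\{0,\dots,m-1\}^{2d}$ gives, for a carry-free corner $(a,b),(a+t,b),(a,b+t)$, exactly the two equations $2\langle\vec a,\vec t\rangle+\|\vec t\|^2=0$ and $2\langle\vec b,\vec t\rangle+\|\vec t\|^2=0$, and these have many nonzero solutions: take $d=2$, $\vec a=\vec b=(1,0)$, $\vec t=(-1,1)$; then $u=(1,0,1,0)$, $v=(0,1,1,0)$ and $w=(1,0,0,1)$ all have squared norm $2$, so the corner with $t=m-1\neq 0$ survives in your set. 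The convexity step you lean on is not available here: with $e=(\vec t,\vec 0)$ and $f=(\vec 0,\vec t)$ orthogonal, the quantity $\|p+e\|^2+\|p+f\|^2-\|p\|^2-\|p+e+f\|^2$ equals $-2\langle e,f\rangle=0$, so your claimed strict inequality is in fact an identity; the fourth point lands on the sphere automatically and no contradiction arises. (In the genuine Behrend argument the three points are $u$, $u+e$, $u+2e$, so the middle point is forced to be the midpoint of two sphere points; in your configuration the midpoint $\tfrac12(v+w)=u+\tfrac12(\vec t,\vec t)$ is not one of the three points, so nothing constrains it.) The streamlined $m=2$ Hamming-weight version fails for the same reason: $|\vec a+\vec t|=|\vec a|$ and $|\vec b+\vec t|=|\vec b|$ do not force $\vec t=0$ (e.g.\ $\vec t=(1,-1,0,\dots,0)$ just swaps a $0$ with a $1$). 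The fact that your parameter count seemed to give a bound ``far stronger than claimed'' with no extra work was the warning sign.

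The paper avoids this by imposing a much stronger digit condition than a norm or a weight: the pair $(x_i,y_i)$ of corresponding digits must take every value in $\{0,\dots,d-1\}^2$ exactly $n/d^2$ times, and corner-freeness is then proved by an induction on the value of $y_i-x_i$, using that the number of indices $i$ with $y_i-x_i$ in a prescribed range is the same for all three points of a putative corner. That invariant genuinely exploits the joint distribution of the two digit strings, which is exactly the coupling between the two halves of the vector that your two separate scalar equations discard. If you want to keep a Behrend-type argument, the standard repair is to collapse the corner to a one-dimensional progression first: the map $(x,y)\mapsto x-y$ sends the corner $(x,y),(x+t,y),(x,y+t)$ to $s,\,s+t,\,s-t$, so $\Lambda=\{(x,y)\in[N]^2\colon x-y\in A\}$ is corner-free whenever $A$ contains no nontrivial $3$-term arithmetic progression, and a Behrend set $A$ then yields $\nu(N)\geq N^{2-c/\sqrt{\log N}}$, which implies the stated bound. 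As written, however, your argument does not establish the theorem.
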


It is worth noting that Atjai and Szemer\'edi \cite{AS} had 
a similar estimate for the largest cardinality of the set $\L\subseteq [N]^{2}$ such that $(x,y)$, $(x',y)$ and $(x,y')\in\L, x'>x$ and $x-y=x'-y'$ implies that $x=x'$ and $y=y'$ (with an additional but not essential assumption that $x'>x$). Since our proof is different from their method, we write it down for completeness. We thank T. Ziegler for bringing us to this reference.

\begin{proof}

	Let $1\ll d\ll n$ be two parameters to be chosen later and assume that $n$ is divisible by $d^{2}$. Let $\L$ be the set of points $(x,y)\in [(2d)^{n}]^{2}$ such that the following condition holds: expand $x=x_{0}+x_{1}(2d-1)+\dots+x_{n-1}(2d-1)^{n-1}$ and $y=y_0+y_{1}(2d-1)+\dots+y_{n-1}(2d-1)^{n-1}$ $0\leq x_i,y_i\leq 2d-2$ for $0\leq i\leq n-1$. Consider the $n$ pairs of integers $(x_{0},y_{0}),(x_{1},y_{1}),\dots,(x_{n-1},y_{n-1})$.  Define $\L$ by $(x,y)\in \L$ if and only if among the pairs $(x_{0},y_{0}),(x_{1},y_{1}),\dots,(x_{n-1},y_{n-1})$, there are exactly $\frac{n}{d^{2}}$ of them that are equal to $(i,j)$ for all $0\leq i, j\leq d-1$. Recall for a set $S$ of cardinality $k$ and $k'$ that divides $k$, $\frac{k!}{(k/k')!^{k'}}$ is the number of different patitions of $S$ where each atom has exactly $k'$ elements.  Using this formula, we get that \[ \vert \Lambda \vert = \frac{n!}{((\frac{n}{d^{2}})!)^{d^{2}}}. \] 
	
	We claim that $\L$ satisfies the properties we are looking for. We first estimate the size of $\L$. 
	Set $n=d^{2}\omega(d)$, where $\o\colon\N\to\N$ is an increasing function such that $\o(d+1)-\o(d)=O(1)$, $\frac{\o(d)}{\log d}\to\infty$ and $\frac{\log\o(d)}{\log d}\to 0$ as $d\to\infty$. For every $N\in\N$, pick $d\in\N$ such that
	\begin{equation}\label{Beh4}
	\begin{split}
	(2d-1)^{d^{2}\o(d)}\leq N<(2d+1)^{(d+1)^{2}\o(d+1)}.
	\end{split}
	\end{equation}
By the Stirling formula, if $d$ and $n/d^{2}=\o(d)$ are large enough, we have that
	\begin{equation}\nonumber
	\begin{split}
	\quad\frac{n!}{((\frac{n}{d^{2}})!)^{d^{2}}} > \frac{n^{n}\sqrt{2\pi n}e^{-n}}{[(n/d^{2})^{n/d^{2}}\sqrt{2\pi(n/d^{2})}e^{-n/d^{2}}]^{d^{2}}}\frac{1}{C^{d^{2}}}
	\geq\frac{d^{2n}}{(\frac{\gamma n}{d^{2}})^{d^{2}/2}}=\frac{d^{2d^{2}\o(d)}}{(\gamma\o(d))^{d^{2}/2}}, 
	\end{split}
	\end{equation}
	where $\gamma=2\pi C^{2}$, and $C$ is a constant as close to 1 as we want. So 
	\begin{equation}\label{Beh5}
	\begin{split}
	&\quad\log \left(  \frac{N^{2}}{\vert L \vert}\right)<\log\left (\frac{(2d+1)^{2(d+1)^{2}\o(d+1)}}{\vert L \vert}\right) <\log\left (\frac{(2d+1)^{2(d+1)^{2}\o(d+1)}(\gamma\o(d))^{d^{2}/2}}{d^{2d^{2}\o(d)}} \right)
	\\&=2(d+1)^{2}\o(d+1)\log(2d+1)-2d^{2}\o(d)\log d+\frac{d^{2}}{2}(\log\gamma+\log\o(d))
	\\&=d^{2}\o(d)(2\log 2+o(1)),
	\end{split}
	\end{equation}
	where in the last step we repeatedly used the properties of $\o(d)$. On the other hand, by (\ref{Beh4}), we have
	$$\log N\geq d^{2}\o(d)\log(2d-1)$$
	and
	$$\log\log N< 2\log(d+1)+\log\o(d+1)+\log\log (2d+1),$$
	which implies that 
	\begin{equation}\label{Beh6}
	\begin{split}
	\frac{\log N}{\log\log N}>d^{2}\o(d)(1/2+o(1)).
	\end{split}
	\end{equation}	
	Combining (\ref{Beh5}) and (\ref{Beh6}), we have that 
	$$ \vert \L \vert > N^{2-\frac{4\log 2+\e}{\log\log N}}$$
	as $N\to\infty$ for all $\e>0$.
	
	\
	
		Now we show that $\Lambda$ is corner-free. Suppose that $(x,y)$, $(x',y)$ and $(x,y')$ belong to  $\L$  and that $x-y=x'-y'$. Expand $w=w_{0}+w_{1}(2d-1)+\dots+w_{n-1}(2d-1)^{n-1}, 0\leq w_{0},\dots,w_{n-1}\leq d-1$ for $w=x,y,x',y'$. Since $0\leq x_i,y_i,x_i',y_i'\leq d-1$ for every $0\leq i\leq n-1$, we have that necessarily
		\begin{equation}\label{equ1}
		\begin{split}
		y_{i}-x_{i}=y'_{i}-x'_{i}
		\end{split}
		\end{equation} 
		for all $0\leq i\leq n-1$. 
		
		If $y_{i}-x_{i}=-(d-1)$ for some $0\leq i\leq n-1$, then by the construction of $\L$, we have $x_{i}=d-1$ and $y_{i}=0$. By (\ref{equ1}),
		$y'_{i}-x'_{i}=y_{i}-x_{i}=-(d-1)$, and so  $x'_{i}=d-1$ and $y'_{i}=0$. Therefore $x_{i}=x'_{i}$ and $y_{i}=y'_{i}$.
		
		Now suppose that $-(d-1)\leq y_{i}-x_{i}\leq h$ implies that $(x_{i},y_{i})=(x'_{i},y'_{i})$ for all $0\leq i\leq n-1$ for some $-(d-1)\leq h\leq d-2$. We show that $y_{i}-x_{i}=h+1$ also implies that $(x_{i},y_{i})=(x'_{i},y'_{i})$ for all $0\leq i\leq n-1$. By the construction of $\L$, the number of the pairs $(x_{i},y_{i})$ such that $0\leq y_{i}-x_{i}\leq h$ is the same as the number of the pairs $(x_{i},y'_{i})$ such that $0\leq y'_{i}-x_{i}\leq h$, and is the same as the number of the pairs $(x'_{i},y_{i})$ such that $0\leq y_{i}-x'_{i}\leq h$. Therefore, by induction hypothesis, if $y_{i}-x_{i}=h+1$, we have $y'_{i}-x_{i}\geq h+1$ and $y_{i}-x'_{i}\geq h+1$.
		So 
		\begin{equation}\nonumber
		\begin{split}
		&\quad y'_{i}-x'_{i}=(y'_{i}-x_{i})+(y_{i}-x'_{i})-(y_{i}-x_{i})
		\\&\geq (h+1)+(h+1)-(h+1)=h+1=y_{i}-x_{i}.
		\end{split}
		\end{equation}
		By (\ref{equ1}), we have that $y'_{i}-x_{i}=y_{i}-x'_{i}=h+1=y_{i}-x_{i}$, which implies that $x_{i}=x'_{i}$ and $y_{i}=y'_{i}$ and we are done. 
		
		We conclude that \[ \nu(N)\geq \vert \L \vert  > N^{2-\frac{4\log 2+\e}{\log\log N}}\]
		as $N\to\infty$ for all $\e>0$.
\end{proof}

 \subsection{Three point free subsets of $\mathbb{N}^3$} 
 We study another combinatorial problem in this section.
 \begin{defn}
 Let $\L$ be a subset of $[N]^3$. We say that $\L$ is \emph{three point free} if $(x,y,z'),(x,y',z),(x',y,z)\in\L$ implies that $x=x', y=y', z=z'$.
 \end{defn}
 In particular, $(x,y,z')$ and $(x,y,z) \in \L$ implies that $z=z'$. so $\Lambda$ contains at most one point on each line parallel to $Z$-axis. Similarly, $\Lambda$ contains at most one point along any line parallel to the $X$ or $Y$-axis. 
 \begin{rem} To a three point free set $\Lambda\subseteq [N]^3$ we can associate a $N\times N$ matrix $A(\Lambda)=(a_{i,j})_{i,j\in [N]}$. To do so, we look at the line $\{(i,j,k): k\in [N]\}$. If there is a point of $\Lambda$ in such a line we set $a_{i,j}$ to be the unique integer in $[N]$ such that  $(i,j,a_{i,j})\in \Lambda$. If there is no point in such a line we just set $a_{i,j}=0$.  If $\Lambda$ is a three point free set, the matrix $A(\Lambda)$ has the following properties:
 \begin{enumerate}
 	\item For every $k\in [N]$, $k$ appears at most once in each row and each column of $A(\Lambda)$. \label{item:Matrix1}
 	\item For every $k\in [N]$, if $a_{i,j}=k$ and $a_{i',j'}=k$ then $a_{i,j'}=a_{i',j}=0$.  \label{item:Matrix2}
 \end{enumerate}
 
 Conversely, if $A$ is a $N\times N$ matrix that satisfies conditions (\ref*{item:Matrix1}) and (\ref*{item:Matrix2}), then there is a three point free set $\Lambda\subseteq [N]^3$ such that $A=A(\Lambda)$. The set $\Lambda$ is just $\{(i,j,a_{i,j}): a_{i,j}\neq 0\}$ and note that the cardinality of $\Lambda$ is the number of non-zero entries of $A(\Lambda)$.   
\end{rem}
 
 The combinatorial problem we study is how large such a set three point free set can be.
 It is clear that if $\Lambda \subseteq [N]^3$ is a three point free set, then $\vert\Lambda\vert \leq N^2$. We show that in fact $\vert\Lambda\vert$ can be sufficiently ``close'' to $N^{2}$:

\begin{thm} \label{Thm:3pointSet}
	Let $w(N)$ denote the largest cardinality of a three point free subset of $[N]^{3}$. 
Then
	$$w(N)>N^{2-\frac{4\log 2+2\e}{\log\log N}}$$
	as $N\to\infty$ for all $\e>0$.
\end{thm}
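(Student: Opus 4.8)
The plan is to deduce Theorem~\ref{Thm:3pointSet} directly from Theorem~\ref{Thm:Lfreeset} by a linear embedding of a corner-free subset of a square into a three point free subset of a cube, at the price of doubling the side length. This price is exactly what turns the error term $\frac{4\log 2+\e}{\log\log N}$ of Theorem~\ref{Thm:Lfreeset} into $\frac{4\log 2+2\e}{\log\log N}$ here.

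Concretely, fix $\e>0$ and, for all large $M$, let $\L'\subseteq [M]^{2}$ be a corner-free set with $\vert\L'\vert>M^{2-\frac{4\log 2+\e}{\log\log M}}$, which exists by Theorem~\ref{Thm:Lfreeset}. Define
\[
\L=\{(x,y,x-y+M):(x,y)\in\L'\}.
\]
Since $1\le x,y\le M$ the third coordinate lies in $[1,2M-1]$, so $\L\subseteq[2M-1]^{3}$; moreover a point of $\L$ is determined by its first two coordinates, so $\vert\L\vert=\vert\L'\vert$. Given any $N$, choosing $M=\lfloor (N+1)/2\rfloor$ gives $2M-1\le N$ and $M\ge N/2$, hence $\L\subseteq[N]^{3}$ and $w(N)\ge\vert\L\vert=\vert\L'\vert$.

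Next I would check that $\L$ is three point free. Suppose $(x,y,z'),(x,y',z),(x',y,z)\in\L$. Reading off the first two coordinates of each point and using that the third coordinate of a point $(a,b,\cdot)\in\L$ equals $a-b+M$, we get $(x,y),(x,y'),(x',y)\in\L'$ together with $z'=x-y+M$, $z=x-y'+M$ and $z=x'-y+M$. The last two equalities give $x-y'=x'-y$, i.e.\ $x-y=x'-y'$; thus $(x,y),(x',y),(x,y')$ form a corner in $\L'$, and the corner-free property forces $x=x'$ and $y=y'$. Then $z'=x-y+M=x-y'+M=z$, so the three points coincide, as required. The one place to be careful is making the ``cyclic'' pattern in the definition of three point free line up with the pattern in the definition of corner-free under this particular embedding; once the indices are matched correctly the verification is immediate.

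Finally, only routine bookkeeping remains. Using that $t\mapsto t^{2-c/\log\log t}$ is increasing for large $t$, we obtain
\[
w(N)\;>\;M^{2-\frac{4\log 2+\e}{\log\log M}}\;\ge\;(N/2)^{2-\frac{4\log 2+\e}{\log\log(N/2)}}.
\]
Since $\log\log(N/2)=\log\log N+o(1)$ and replacing $N$ by $N/2$ costs only an additive $O(1)$ in the exponent of $\log N$, comparing logarithms shows the right-hand side exceeds $N^{2-\frac{4\log 2+2\e}{\log\log N}}$ for all large $N$: the surplus term $\frac{\e\log N}{\log\log N}$ beats the $O(1)$ loss. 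I do not expect any genuine obstacle here; the substantive content is entirely contained in Theorem~\ref{Thm:Lfreeset}, and the main thing to get right is the combinatorial matching in the previous paragraph. (One could alternatively rerun the Behrend-type argument of Theorem~\ref{Thm:Lfreeset} directly in $\mathbb{N}^{3}$, but the reduction above is shorter.)
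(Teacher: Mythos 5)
Your reduction to Theorem \ref{Thm:Lfreeset} is the right idea --- it is essentially what the paper does --- but the embedding you chose has the wrong sign, and the verification of the three point free property breaks exactly at the step you flagged as ``the one place to be careful.'' From $z=x-y'+M$ and $z=x'-y+M$ you get $x-y'=x'-y$, which is equivalent to $x+y=x'+y'$, \emph{not} to $x-y=x'-y'$; the ``i.e.'' in your argument is an algebra error. The pattern you actually need to exclude is $(x,y),(x+s,y),(x,y-s)$ (an ``anti-corner''), and corner-freeness as defined in the paper does not rule this out. Concretely, $\L'=\{(1,1),(1,2),(2,2)\}\subseteq[2]^2$ is corner-free (the only candidate triple has $x-y=-1\neq 1=x'-y'$), yet your image $\L=\{(1,1,2),(1,2,1),(2,2,2)\}$ contains the three points $(1,2,1)=(x,y,z')$, $(1,1,2)=(x,y',z)$, $(2,2,2)=(x',y,z)$ with $x\neq x'$, so it is not three point free. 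Since you use Theorem \ref{Thm:Lfreeset} as a black box, nothing you have proved excludes anti-corners from the set it supplies.

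The fix is to reverse the orientation: embed via $z=C-x-y$ for a suitable constant $C$ (so that $z\in[N]$), or equivalently do what the paper does --- take $V=\L\times[N]\subseteq[N]^3$, slice it by the level sets $V_s=\{(x,y,z)\in V: x+y+z=s\}$, and keep a slice of size at least $|V|/3N=|\L|/3$ by pigeonhole. On such a slice the two equalities $s-x-y'=z=s-x'-y$ give $x+y'=x'+y$, i.e.\ $x-y=x'-y'$, which is precisely the hypothesis of corner-freeness, and the conclusion $x=x'$, $y=y'$, $z=z'$ follows. With that correction your bookkeeping paragraph (absorbing the factor $3$, or your factor-of-two loss in $N$, into the passage from $\e$ to $2\e$) is fine and matches the paper's computation.
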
  

\begin{proof}
Let $n$, $d$, $N$ and $\L$ be given in the proof of Theorem \ref{Thm:Lfreeset}. Define 
\[ V:=\Bigl\{ (x,y,z)\colon (x,y)\in \L \Bigr\}\subseteq [N]^3. \]
and
	\[ V_{s}:=\Bigl\{ (x,y,z)\in V\colon x+y+z=s  \Bigr\}. \]
We have that for big enough $N$, 
\[\sum_{s=0}^{3N-3}\vert V_{s}\vert=\vert V\vert = N\cdot N^{2-\frac{4\log 2+\e}{\log\log N}}.
\]
So there exists $0\leq s\leq 3N-3$ such that 
$$\vert V_{s}\vert\geq\frac{\vert V\vert}{3N}=N^{2-\frac{4\log 2+\e}{\log\log N}}/3>N^{2-\frac{4\log 2+2\e}{\log\log N}}$$
provided $N$ is large enough.
We verify that $V_{s}$ is three point free. Suppose that $(x,y,z')$, $(x,y',z)$ and $(x',y,z)$ belong to  $V_{s}$.
Then in particular we have that $(x,y)$, $(x,y')$ and $(x',y)$ belong to $\L$ and  $s-x-y'=z=s-x'-y$. So $x'-x=y'-y$.  Since $\L$ is corner-free, we have that $x=x'$ and $y=y'$. This implies that $z'=s-x-y=s-x'-y=z$ and we conclude that $V_{s}$ is three point free.

It follows immediately that \[ w(N)\geq \vert V_{s} \vert \geq  N^{2-\frac{4\log 2+2\e}{\log\log N}}\] 
 provided that $N$ is large enough. 
\end{proof}

\section{Nilsystems and affine nilsystems}
In all that follows, we make use of the class of nilsystems, specially of {\em affine nilsystems} and we briefly introduce them.
\subsection{Affine nilsystems with a single transformation}
Let $G$ be a group. For $a,b \in G$, $[a,b]:=aba^{-1}b^{-1}$ denotes the \emph{commutator} of $a$ and $b$ and for $A$ and $B$ subsets of $G$, and $[A,B]$ denotes the group generated by all the commutators $[a,b]$ for  $a\in A$ and $b\in B$.  The commutator subgroups are defined recursively as $G_1=G$ and $G_{j+1}=[G_j,G]$, $j\geq 1$. We say that $G$ is \emph{$d$-step nilpotent} if $G_{d+1}=\{ 1 \}$.  

Let $G$ a $d$-step nilpotent Lie group and $\Gamma$ be a discrete a cocompact subgroup of $G$. The compact manifold $G/\Gamma$ is a \emph{$d$-step nilmanifold}. The group $G$ acts on $G/\Gamma$ as left translations and there is a unique probability measure $\mu$ which invariant under such action (called the \emph{Haar measure}).  A dynamical system of the form $(G/\Gamma,\mathcal{B}(G/\Gamma),\mu,T_1,\ldots,T_n)$ is called a \emph{nilsystem}, where $\mathcal{B}(G/\Gamma)$ is the \emph{Borel} $\sigma$-algebra of $G/\Gamma$, and each $T_i$ is  given by the rotation by a fixed $\tau_i \in G$, i.e. $T_i\colon G/\Gamma \to G/\Gamma, x\mapsto \tau_i x$.  

An important class of such systems are the affine nilsystems. The affine nilsytems for a single transformation are defined as follows. Let ${\alpha}\in \mathbb{T}^d$ and let $A$ be a $d\times d$ \emph{unipotent} integer matrix (i.e. $(A-I)^d=0$). Let $T\colon \mathbb{T}^d\to \mathbb{T}^d$ be the affine transformation $x\mapsto Ax+{\alpha}$. Let $G$ be the group of transformations of $\mathbb{T}^d$ generated by $A$ and the translations of $\mathbb{T}^d$. That is,  every element $g\in G$ is a map $x\mapsto A^i x +\beta$ for some $i\in \mathbb{Z}$ and $\beta \in \mathbb{T}^d$. The group $G$ acts transitively on $\mathbb{T}^d$ and we may identify this space with $G/\Gamma$, where $\Gamma$ is  the stabilizer of 0, which consists of the powers of $A$. The  system $(\mathbb{T}^d,\mu^{\otimes d},T)$ is called an \emph{affine nilsystem} (here $\mu$ is the Haar measure on $\mathbb{T}$). Properties such as transitivity, minimality, ergodicity and unique ergodicity are equivalent for a system in this class and this can be checked by looking at the rotation induced by ${\alpha}$ on the projection $\mathbb{T}^d/Ker(A-I)$ \cite{P}.

\subsection{Affine nilsystems with several transformations} When we consider different affine transformations $T_i\colon \mathbb{T}^d\to \mathbb{T}^d$, $x\mapsto A_i x + \alpha_i$, where $A_i$ is unipotent for every $i=1,\ldots,n$, we can still regard the system $(\mathbb{T}^d,\mathcal{B}(\mathbb{T}^d),\mu^{\otimes d}$ , $T_1,\ldots, T_n)$ as a nilsystem as long as the matrices commute. Indeed, let $G$ be the group of transformations of $\mathbb{T}^d$ generated by the matrices $A_1,\ldots,A_n$ and the translations of $\mathbb{T}^d$. Then every element $g\in G$ is a map $x\mapsto A(g)x+ \beta(g)$, where $A(g)=A_1^{m_1}\cdots A_n^{m_n}$, $m_1,\ldots,m_n \in \mathbb{Z}$ and  $\beta(g)\in \mathbb{T}^d$.  

A simple computation shows that if $g_1,g_2\in G$, the commutator $[g_1,g_2]$ is the map $x\mapsto x + (A(g_1)-I)\beta(g_2) + (A(g_2)-I)\beta(g_1)$ and thus is a translation of $\mathbb{T}^d$. In the other hand, if $g\in G$ and  $\beta \in \mathbb{T}^d$, then $[g,\beta]$ is the translation $x\mapsto x+(A(g)-I)\beta$. It follows that the iterated commutator $[\cdots[[g_1,g_2],g_3]\cdots g_k]$ belongs to $\mathbb{{T}}^d$ and is contained in the image of  $(A(g_3)-I)\cdots(A(g_k)-I)$. If $k$ is large enough, this product is trivial. So $G$ is a nilpotent Lie group. The torus $\mathbb{T}^d$ can be identified with $G/\Gamma$, where $\Gamma$ is the stabilizer of 0, which is the group generated by the matrices $A_1,\ldots, A_n$. We refer to 
$(\mathbb{T}^d,\mathcal{B}(\mathbb{T}^d),\mu^{d},T_1,\ldots,T_n)$ as an affine nilsystem with $n$ transformations. It is worth noting that the transformations $T_i$ and $T_j$ commute if $(A_i-I)\alpha_j= (A_{j}-I)\alpha_i$ in $\mathbb{T}^d$. 

By a theorem from Leibman \cite{L1}, we get:
\begin{prop}\label{Property:MinimalErgodic}
Let $(\mathbb{T}^d,\mathcal{B}(\mathbb{T}^d),\mu^{d},T_1,\ldots,T_n)$ be an affine nilsystem with $n$ transformations. Then the properties  of transitivity, minimality, ergodicity and unique ergodicity under the action of $\langle T_1\ldots,T_n \rangle$ are equivalent. 
\end{prop}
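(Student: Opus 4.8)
The plan is to connect the four properties through the maximal equicontinuous factor of the nilsystem --- a torus rotation, for which their equivalence is the classical Weyl--Kronecker criterion --- and then to transfer it back to $\mathbb{T}^d$ using the structure theory of nilsystems, which is exactly what \cite{L1} supplies.

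First I would make the nilsystem structure explicit: writing $\tau_i\in G$ for the affine map $x\mapsto A_ix+\alpha_i$, the system is $(G/\Gamma,\mu^{\otimes d})$ acted on by the finitely generated nilpotent group $H=\langle\tau_1,\dots,\tau_n\rangle\le G$, where $G$ and $\Gamma$ are as in the preceding subsection. It is convenient to pass first to the connected nilpotent Lie group $\widehat G$ generated by the translations of $\mathbb{T}^d$ together with the one-parameter subgroups $t\mapsto\exp(t\log A_i)$ (the $\log A_i$ being commuting rational nilpotent matrices); this presents $\mathbb{T}^d$ as a connected nilmanifold acted on by the finitely generated group $H$.

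Several implications are then automatic. Unique ergodicity trivially implies ergodicity, and minimality trivially implies transitivity. Ergodicity also implies transitivity: $\mu^{\otimes d}$ has full support, so for a countable base $\{U_k\}$ of $\mathbb{T}^d$ each set $\bigcup_{h\in H}hU_k$ is open, $H$-invariant, and of positive --- hence, by ergodicity, full --- measure, so the full-measure $G_\delta$ set $\bigcap_k\bigcup_{h\in H}hU_k$ is nonempty and consists of points with dense $H$-orbit. It therefore suffices to prove (i) transitivity $\Rightarrow$ minimality and (ii) minimality $\Rightarrow$ unique ergodicity, and for both I would invoke \cite{L1}. For (i): the closure of an $H$-orbit in a nilsystem is a finite union of sub-nilmanifolds cyclically permuted by $H$, on each of which $H$ acts minimally; since $\mathbb{T}^d$ is connected, a dense orbit forces this union to be a single piece equal to $\mathbb{T}^d$, so $H$ acts minimally. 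For (ii): project to the maximal equicontinuous factor, which is a torus on which $H$ acts by rotations; minimality passes to it, a minimal group of torus rotations is uniquely ergodic, and one then climbs back up the lower central series of $\widehat G$, each step being governed by the equidistribution theorem for polynomial orbits on nilmanifolds \cite{L1}, which propagates unique ergodicity down to $\mathbb{T}^d$.

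The substantive content is entirely in \cite{L1}; the only real care needed is in matching hypotheses --- checking that the affine nilsystem with its translation group $H$ genuinely satisfies the (possibly disconnected-$G$, or connected-cover) setting of Leibman's orbit-closure and equidistribution theorems, and that, as asserted, connectedness of $\mathbb{T}^d$ rules out orbit closures that are proper unions of translates. I expect this hypothesis-matching, rather than any new argument, to be the main point. A more self-contained but longer alternative would be to bypass \cite{L1} and instead compute, via Fourier analysis, the single Weyl-type condition on $(\alpha_i)$ relative to $(A_i)$ that is simultaneously equivalent to each of the four properties.
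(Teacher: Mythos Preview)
Your proposal is correct and is essentially the paper's approach: the paper gives no argument at all beyond the sentence ``By a theorem from Leibman \cite{L1}, we get,'' and your write-up simply unpacks how that citation is to be applied. The elaboration you give (the easy implications, reduction to transitivity $\Rightarrow$ minimality $\Rightarrow$ unique ergodicity, and the appeal to Leibman's orbit-closure and equidistribution results) is accurate and is exactly the content being imported from \cite{L1}.
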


Recall that $(\mathbb{T}^d,\mathcal{B}(\mathbb{T}^d),\mu^{d},T_1,\ldots,T_n)$ is \emph{minimal} under $\langle T_1\ldots,T_n \rangle$ if the orbit closure of every $x\in\mathbb{T}^d$ under $\langle T_1\ldots,T_n \rangle$ is $\mathbb{T}^d$ (in this paper we do not need the concepts of transitivity and unique ergodicity).
We refer to \cite{L1} for a modern reference on nilmanifolds and nilsystems.

\section{Proofs of the main theorems}

We are now ready to prove the main theorems.

\begin{proof}[Proof of Theorem \ref{Thm:2Recurrence}]
	Let $\alpha,\beta \in\mathbb{R}\backslash\mathbb{Q}$ be rationally independent numbers. Let $X=\mathbb{T}^{6}$ with transformations
	$$T_{1}(x_{1},x_{2},x_{3};y_{1},y_{2},y_{3})=(x_{1}+\a,x_{2}+\b,x_{3};y_{1}+x_{1},y_{2},y_{3}+x_{1}+x_{2}+x_{3})$$
	and $$T_{2}(x_{1},x_{2},x_{3};y_{1},y_{2},y_{3})=(x_{1},x_{2}+\b,x_{3}+\a;y_{1},y_{2}+x_{3},y_{3}+x_{1}+x_{2}+x_{3}).$$
	
	We have that $(X,\mathcal{B}(\mathbb{T}^6),\mu\otimes\mu\otimes\mu\otimes\mu\otimes\mu\otimes\mu,T_{1},T_{2})$ is an affine nilsystem with two transformations, where $\mu$ is the Haar measure on $\mathbb{T}$. Notice that
	\begin{equation}\nonumber
	\begin{split}
	&\quad T_{1}T_{2}(x_{1},x_{2},x_{3};y_{1},y_{2},y_{3})=T_{2}T_{1}(x_{1},x_{2},x_{3};y_{1},y_{2},y_{3})
	\\&=(x_{1}+\a,x_{2}+2\b,x_{3}+\a;y_{1}+x_{1},y_{2}+x_{3},y_{3}+x_{1}+x_{2}+x_{3}+\a+\b).
	\end{split}
	\end{equation}
	
	We first claim that the system is minimal and ergodic. To see this, let $(x_1,x_2,x_3;y_1,y_2,y_3)$ and $(x_1',x_2',x_3';y_1',y_2',y_3')\in \mathbb{T}^6$. It is not hard to see that $(x_1',x_2';x_3,y_1',y_2,y_3')$ belongs to the orbit closure of $(x_1,x_2,x_3;y_1,y_2,y_3)$ under the transformation $T_1$. We also have that $(x_1',x_2',x_3';y_1',y_2',y_3')$ is in the orbit closure of    $(x_1',x_2',x_3,y_1',y_2,y_3')$ under the transformation $T_2$ (here we use the fact that for fixed $x_1$, the transformation $(x_2,x_3,y_2,y_3)\mapsto (x_2+\beta,x_3+\alpha,y_2+x_3,y_3+x_1+x_2+x_3)$ is minimal in $\mathbb{T}^4$). We conclude that $(x_1',x_2',x_3',y_1',y_2',y_3')$ is in the orbit closure of $(x_1,x_2,x_3;y_1,y_2,y_3)$ under $\langle T_1,T_2\rangle$. Since since the points are arbitrary, the system is minimal and hence ergodic by Proposition \ref{Property:MinimalErgodic}. 

\
	
	Let $N\in\N$ to be chosen later and $\L\subseteq [N]^{3}$ be a three point free set. For $(a,b,c)\in[N]^{3}$, denote $$Q_{a,b,c}=\Bigl(\frac{a}{N},\frac{a}{N}+\frac{1}{2N}\Bigr)\times\Bigl(\frac{b}{N},\frac{b}{N}+\frac{1}{2N}\Bigr)\times\Bigl(\frac{c}{N},\frac{c}{N}+\frac{1}{2N}\Bigr)\subseteq\mathbb{T}^{3}$$ and let $B=\cup_{(a,b,c)\in\L}Q_{a,b,c}$, $A=\mathbb{T}^{3}\times B$. Then 
	$$\mu\otimes\mu\otimes\mu\otimes\mu\otimes\mu\otimes\mu(A)=\frac{\vert\L\vert}{8N^{3}}.$$
	On the other hand,
	\begin{equation}\nonumber
	\begin{split}
	&\quad \mu\otimes\mu\otimes\mu\otimes\mu\otimes\mu\otimes\mu(A\cap T^{-n}_{1}A\cap T^{-n}_{2}A)
	\\&=\int_{\mathbb{T}^{6}}\bold{1}_{B}(y_{1},y_{2},y_{3})\bold{1}_{B}(y'_{1},y_{2},y'_{3})\bold{1}_{B}(y_{1},y'_{2},y'_{3})d\mu\otimes\dots\otimes\mu(x_{1},x_{2},x_{3};y_{1},y_{2},y_{3}),
	\end{split}
	\end{equation}
	where
	$$y'_{1}=y_{1}+nx_{1}+\binom{n}{2}\a, ~~~  y'_{2}=y_{2}+nx_{2}+\binom{n}{2}\a, ~~~ y'_{3}=y_{3}+n(x_{1}+x_{2}+x_{3})+\binom{n}{2}(\a+\b).$$
	
	Suppose that the product of the functions inside the integral is nonzero. Then we may assume that 
	$(y_{1},y_{2},y_{3})\in Q_{a,b,c}$, $(y'_{1},y_{2},y_{3}')\in Q_{a',b,c'}$ and $(y_{1},y'_{2},y'_{3})\in Q_{a,b',c'}$ for some $(a,b,c)$, $(a',b,c')$ and $(a,b',c')$ that belong to $\L$. Since $\L$ is three point free, we have that $a=a'$, $b=b'$ and $c=c'$. This implies that $$nx_{1}+\binom{n}{2}\a,~~ nx_{2}+\binom{n}{2}\a,~~ y_{3}+n(x_{1}+x_{2}+x_{3})+\binom{n}{2}(\a+\b)\in(-\frac{1}{2N},\frac{1}{2N}).$$ Therefore,   
	\begin{equation}\nonumber
	\begin{split}
	&\quad \mu\otimes\mu\otimes\mu\otimes\mu\otimes\mu\otimes\mu(A\cap T^{-n}_{1}A\cap T^{-n}_{2}A)
	\\&=\int_{\mathbb{T}^{6}}\bold{1}_{B}(y_{1},y_{2},y_{3})\bold{1}_{B}(y'_{1},y_{2},y'_{3})\bold{1}_{B}(y_{1},y'_{2},y'_{3})d\mu\otimes\dots\otimes\mu(x_{1},x_{2},x_{3};y_{1},y_{2},y_{3})
	\\&= \frac{1}{N^{3}}\int_{\mathbb{T}^{3}}\bold{1}_{B}(y_{1},y_{2},y_{3})d\mu(y_{1})d\mu(y_{2})d\mu(y_{3})=\frac{1}{N^{3}}\cdot\frac{\vert\L\vert}{8N^{3}}.
	\end{split}
	\end{equation}
	
	We have that $\mu\otimes\mu\otimes\mu\otimes\mu\otimes\mu\otimes\mu(A\cap T^{-n}_{1}A\cap T^{-n}_{2}A)=\frac{\vert\L\vert}{8N^{6}}\leq \frac{\vert\L\vert^\ell}{(8N^{3})^{\ell}}=\mu(A)^{\ell}$
	if and only if 
	\begin{equation} \label{Eq:Exponent2}
	\ell\leq 1+\frac{3}{3+\frac{\log(8)}{\log(N)}-\frac{\log(\vert \L \vert)}{\log(N)}}.
	\end{equation} 
	By Theorem \ref{Thm:3pointSet}, we can take $\Lambda$ of cardinality larger than $N^{2-\epsilon}$ and thus the right hand side in \eqref{Eq:Exponent2} can be as close to 4 as we want. This finishes the proof. 
\end{proof}

\begin{proof}[Proof of Theorem  \ref{Thm:Nilpotent}]
	Let $\alpha \in\mathbb{R}\backslash\mathbb{Q}$. Let $X=\mathbb{T}^{3}$ with transformations
	$$T_{1}(x,y,z)=(x+\alpha,y+x,z)$$
	and $$T_{2}(x,y,z)=(x+\alpha,y,z+x).$$
	
	It is an affine nilsystem with two transformations. We first claim that the system $(X,\mathcal{B}(\mathbb{T}^3),\mu\otimes\mu\otimes\mu,T_{1},T_{2})$ is minimal and ergodic, where $\mu$ is the Haar measure on $\mathbb{T}$. To see this, take $(x_0,y_0,z_0)\in \mathbb{T}^{3}$ and note that the closure of the orbit of this point under the transformation $T_1$ is $\mathbb{T}\times \mathbb{T} \times \{z_0\}$. Let $(x,y,z)\in \mathbb{T}^{3}$ be an arbitrary point and notice that this point is contained in the  orbit closure of $(x_0,y,z_0)$ under the transformation $T_2$. So the closure of the orbit of $(x_0,y_0,z_0)$ under $\langle T_1,T_2\rangle$ is  $\mathbb{T}^3$. We conclude that the system is minimal and thus ergodic by Proposition \ref{Property:MinimalErgodic}. 
	
	It is easy to verify that $[T_{1},T_{2}](x,y,z)=(x,y+\a,z-\a)$ commutes with $T_{1}$ and $T_{2}$. So $T_{1}$ and $T_{2}$ generate a 2-step nilpotent group.
	
	Let $N\in\N$ to be chosen later and $\L\subseteq [N]^{2}$ be a corner-free set. For $(a,b)\in[N]^{2}$, denote $$Q_{a,b}=\Bigl(\frac{a}{N},\frac{a}{N}+\frac{1}{2N}\Bigr)\times\Bigl(\frac{b}{N},\frac{b}{N}+\frac{1}{2N}\Bigr)\subseteq\mathbb{T}^{2}$$ and let $B=\cup_{(a,b)\in\L}Q_{a,b}$ and $A=\mathbb{T}\times B$. Then 
	$$\mu\otimes\mu\otimes\mu(A)=\frac{\vert\L\vert}{4N^{2}}.$$
	On the other hand,
	\begin{equation}\nonumber
	\begin{split}
	&\quad \mu\otimes\mu\otimes\mu(A\cap T^{-n}_{1}A\cap T^{-n}_{2}A)
	\\&=\int_{\mathbb{T}^{3}}\bold{1}_{B}(y,z)\bold{1}_{B}\Bigl(y+nx+\binom{n}{2}\a,z\Bigr)\bold{1}_{B}\Bigl(y,z+nx+\binom{n}{2}\a\Bigr)d\mu(x)d\mu(y)d\mu(z).
	\end{split}
	\end{equation}
	Suppose that the product of the functions inside the integral is nonzero. Write $c=nx+\binom{n}{2}\a$ for convenience. Then we may assume that 
	$(y,z)\in Q_{a,b}$, $(y+c,z)\in Q_{a',b}$ and $(y,z+c)\in Q_{a,b'}$ for some $(a,b)$, $(a',b)$   $(a,b')$ that belong to $\L$. Since $(y+c)-y=(z+c)-z$, by the construction of the $Q_{a,b}$'s, we must have that $a'-a=b'-b$. Since $\L$ is $L$-free, we have that $a=a'$ and $b=b'$. This implies that $c=nx+\binom{n}{2}\a\in(-\frac{1}{2N},\frac{1}{2N})$. Therefore,   
	\begin{equation}\nonumber
	\begin{split}
	&\quad \mu\otimes\mu\otimes\mu(A\cap T^{-n}_{1}A\cap T^{-n}_{2}A)
	\\&=\int_{\mathbb{T}^{3}}\bold{1}_{B}(y,z)\bold{1}_{B}\Bigl(y+nx+\binom{n}{2}\a,z\Bigr)\bold{1}_{B}\Bigl(y,z+nx+\binom{n}{2}\a\Bigr)d\mu(x)d\mu(y)d\mu(z)
	\\&\leq \frac{1}{N}\int_{\mathbb{T}^{2}}\bold{1}_{B}(y,z)d\mu(y)d\mu(z)=\frac{1}{N}\cdot\frac{\vert\L\vert}{4N^{2}}.
	\end{split}
	\end{equation}
	
	We have that $\mu\otimes\mu\otimes\mu(A\cap T^{-n}_{1}A\cap T^{-n}_{2}A)=\frac{\vert\L\vert}{4N^{3}}\leq \frac{\vert\L\vert^\ell}{(4N^{2})^{\ell}}=\mu(A)^{\ell}$
	if and only if \begin{equation} \label{Eq:Exponent}
	\ell\leq 1+\frac{1}{2+\frac{\log(4)}{\log(N)}-\frac{\log(\vert \L \vert)}{\log(N)}}
	\end{equation}

	By Theorem \ref{Thm:Lfreeset}, we can take $\Lambda$ of cardinality larger than $N^{2-\epsilon}$ and thus the right hand side in \eqref{Eq:Exponent} can be arbitrarily large. This finishes the proof.    
\end{proof}

For Theorem \ref{Thm:3Recurrence},
we make use of the following theorem in \cite{BHK} which gives a negative answer to Question \ref{Q:Poincare} in the non-ergodic case. 

\begin{thm} \label{Thm:NonErgodic}
	Let $(\mathbb{T}^2,\mathcal{B}(\mathbb{T}^2),\mu\otimes \mu, T)$ be a measure preserving system on $\mathbb{T}^2$, where $\mu$ is the Lebesgue measure on $\mathbb{T}$ and $T$ is the transformation $(x,y)\mapsto (x,y+x)$.
	For every $\ell>0$, there exists $B\in\mathcal{B}(\mathbb{T}^2)$ such that $$\mu(B\cap T^{-n} B \cap T^{-2n} B) \leq \mu(B)^{\ell}$$ for every $n\neq 0$.  
\end{thm}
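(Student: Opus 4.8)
The plan is to take $B$ of product form $B=\mathbb{T}\times F$, where $F\subseteq\mathbb{T}$ is a slight thickening of a Behrend‑type set with no three‑term arithmetic progression; the point is that the $x$‑coordinate is $T$‑invariant and plays no active role, while the constraint defining $B\cap T^{-n}B\cap T^{-2n}B$ becomes a three‑term progression in the $y$‑fibers. First I would record that $T^{n}(x,y)=(x,y+nx)$, so that
\[
B\cap T^{-n}B\cap T^{-2n}B=\bigl\{(x,y):\ y\in F,\ y+nx\in F,\ y+2nx\in F\bigr\}.
\]
For $n\neq 0$ the map $x\mapsto nx$ preserves the Haar measure of $\mathbb{T}$, so by Fubini and the substitution $u=nx$,
\[
\mu\bigl(B\cap T^{-n}B\cap T^{-2n}B\bigr)=\int_{\mathbb{T}}\int_{\mathbb{T}}\mathbf{1}_{F}(y)\,\mathbf{1}_{F}(y+u)\,\mathbf{1}_{F}(y+2u)\,d\mu(y)\,d\mu(u),
\]
a quantity that does not depend on $n$. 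Thus it suffices to make this last integral small compared with $\mu(F)^{\ell}=\mu(B)^{\ell}$.

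Fix $\e>0$ and, for $N$ large, use Behrend's construction \cite{Beh} (the same one underlying Theorem~\ref{Thm:Lfreeset}) to choose $S\subseteq\{0,1,\dots,\lfloor N/4\rfloor\}$ with no three‑term arithmetic progression and $|S|>N^{1-\e}$. Set $Q_{a}=\bigl(\tfrac{a}{N},\tfrac{a}{N}+\tfrac{1}{2N}\bigr)$, $F=\bigcup_{a\in S}Q_{a}$, so that $\mu(F)=|S|/(2N)$. If the integrand above is nonzero, then $y\in Q_{s_{1}}$, $y+u\in Q_{s_{2}}$, $y+2u\in Q_{s_{3}}$ with $s_{1},s_{2},s_{3}\in S$; the identity $(y+2u)-2(y+u)+y=0$ then forces $|s_{1}-2s_{2}+s_{3}|<1$, hence $s_{1}+s_{3}=2s_{2}$, hence $s_{1}=s_{2}=s_{3}$ as $S$ is progression‑free. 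Consequently $u$ lies within $1/(2N)$ of $0$ in $\mathbb{T}$, and summing the contribution of each $s\in S$ yields $\mu(B\cap T^{-n}B\cap T^{-2n}B)\le C|S|/N^{2}$ for every $n\neq 0$, with $C$ an absolute constant. The one point that needs care here is the elimination of modular wrap‑around: because $S$ is kept inside an initial segment of length $\lfloor N/4\rfloor$, one can pick a single real representative $\tilde u$ of $u$ with $y,\,y+\tilde u,\,y+2\tilde u$ equal to the genuine points of $Q_{s_{1}},Q_{s_{2}},Q_{s_{3}}$, which legitimizes the displayed identity over $\mathbb{Z}$; I expect this to be the only genuinely fiddly step.

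Finally, since $\mu(B)=\mu(F)=|S|/(2N)<1$, the desired bound $\mu(B\cap T^{-n}B\cap T^{-2n}B)\le\mu(B)^{\ell}$ reduces, after taking logarithms and dividing by the negative quantity $\log(|S|/(2N))$, to
\[
\ell\le 1+\frac{\log N+O(1)}{\log N-\log|S|+O(1)}.
\]
As $\log N-\log|S|<\e\log N$, the right‑hand side exceeds $1+\tfrac{1}{2\e}$ once $N$ is large. Hence, given $\ell>0$, it is enough to choose $\e<\tfrac{1}{2\ell}$ and then $N$ large, and $B=\mathbb{T}\times F$ is the required set. The computation otherwise runs exactly parallel to the proof of Theorem~\ref{Thm:Nilpotent}; alternatively, one may simply quote this statement from \cite{BHK}.
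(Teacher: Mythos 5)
Your proposal is correct. The paper itself does not prove this statement --- it is quoted directly from Bergelson, Host and Kra \cite{BHK}, with only a remark indicating that the set $B$ comes from Behrend's construction --- and your argument is precisely that construction, carried out in full: the reduction to an $n$-independent integral via the measure-preserving map $x\mapsto nx$, the thickened Behrend set confined to an initial segment to kill wrap-around, and the exponent count all check out, and they run exactly parallel to the paper's own proofs of Theorems \ref{Thm:2Recurrence} and \ref{Thm:Nilpotent}. So you have supplied a self-contained proof where the paper only cites one, and the two routes are the same in substance.
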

\begin{rem}
	 It is worth noting that the subset $B$ is constructed with the help of Behrend Theorem's on subsets of integers with no arithmetic progressions of length 3 \cite{Beh}.
\end{rem}
The system in Theorem \ref{Thm:NonErgodic} is not ergodic.
The idea to prove Theorem \ref{Thm:3Recurrence} is to take an extension of the system in Theorem \ref{Thm:NonErgodic} and add a transformation to make it ergodic. 
\begin{proof}[Proof of Theorem \ref{Thm:3Recurrence}]
	Let $\mu$ be the Haar measure on $\mathbb{T}$ and let $\alpha \in \mathbb{R}\setminus \mathbb{Q}$. Consider $X=\mathbb{T}\times	\mathbb{T} \times \mathbb{T}$ and its Haar measure $\mu\otimes \mu \otimes\mu$. Let $$T_1(x,y,z)=(x+\alpha, y, z+y),$$ $T_2=T_1^2$ and $$T_3(x,y,z)=(x,y+\alpha,z+x).$$ Since $$T_1T_3(x,y,z)=T_3T_1(x,y,z)=(x+\alpha,y+\alpha,z+x+y+\alpha),$$ $T_1$, $T_2$ and $T_3$ commute. We leave its proof to the readers that the action generated by $\langle T_1,T_2,T_3\rangle$ is minimal and hence ergodic in $X=\mathbb{T}\times	\mathbb{T} \times \mathbb{T}$ by Proposition \ref{Property:MinimalErgodic}. 
	By Theorem \ref{Thm:NonErgodic}, for every $\ell>0$, we can find a subset $B\subseteq \mathbb{T}\times	\mathbb{T}$ such that $$\mu \otimes \mu(B\cap T^{-n}B \cap T^{-2n}B) \leq \mu(B)^{\ell}$$ for all $n\neq 0$. Now it suffices to consider the set $A=\mathbb{T}\times B \subseteq\mathbb{T}\times \mathbb{T}\times \mathbb{T} $   and notice that
	\begin{equation}\nonumber
	\begin{split}
	   &\quad\mu\otimes \mu \otimes \mu(A\cap T_1^{-n} A \cap T_2^{-n} A \cap T_3^{-n} A)\\&\leq \mu\otimes \mu (B\cap T^{-n} B \cap T^{-2n} B) \leq \mu\otimes \mu (B)^{\ell}=\mu\otimes \mu\otimes \mu (A)^{\ell}
	\end{split}
	\end{equation}
 for every $n\neq 0$. 
\end{proof}


\begin{thebibliography}{SSS}
 	
  \bibitem{AS} 	M. Ajtai and E. Szemer\'edi,
 	Sets of lattice points that form no squares, {\it Stud. Sci. Math. Hungar.} {\bf 9} (1974), 9--11.
 	
 	
 	\bibitem{Beh} F. A. Behrend. On sets of integers which contain no three in arithmetic progression.
 	Proc. Nat. Acad. Sci. {\bf 23} (1946), 331--332.
 	
 	
   \bibitem{BHK} V. Bergelson, B. Host and B. Kra, Multiple recurrence and nilsequences. With an appendix by Imre Ruzsa, {\it Invent. Math.} {\bf  160} (2005), no. 2, 261--303.
 	
 	
     \bibitem{Chu} Q. Chu, Multiple recurrence for two commuting transformations, \textit{Ergodic Theory Dynam. Systems} {\bf  31} (2011), no. 3, 771--792.
     
  \bibitem{CZ} Q. Chu and P. Zorin-Kranich, Lower bound in the Roth theorem for amenable groups, \textit{ Ergodic Theory Dynam. Systems} {\bf 35} (2015), no. 6, 1746--1766.
 	
 \bibitem{Fu} H. Furstenberg, Ergodic behavior of diagonal measures and a theorem of Szemer\'edi on arithmetic progressions, \textit{ J. Analyse Math.}{ \bf  31} (1977), 204--256.	
 \bibitem{FK}	H. Furstenberg and Y. Katznelson,  An ergodic Szemer\'edi theorem for commuting transformations, \textit{J. Analyse Math.} {\bf  34} (1978), 275--291.
 
 \bibitem{K}  A. Khintchine, Eine Versch\"arfung des Poincar\'eschen "Wiederkehrsatzes''. (German), \textit{ Compositio Math.} {\bf 1} (1935), 177--179.

 \bibitem{L1} Pointwise convergence of ergodic averages for polynomial sequences of translations on a nilmanifold, \textit{ Ergodic Theory Dynam. Systems} {\bf 25} (2005), no. 1, 201--113.
 
\bibitem{P} W. Parry, Ergodic properties of affine transformations and flows on nilmanifolds, \textit{Amer. J. Math.} {\bf 91} (1969) 757--771.
 	
    \bibitem{SS} R. Salem and D. C. Spencer, On sets of integers which contain no three terms in arithmetical progression, \textit{ Proc. Nat. Acad. Sci. U. S. A.} {\bf  28}, (1942). 561--563.
    


    

    
 \end{thebibliography}
\end{document}